\newtheorem{thm}{Theorem}[section]  
\newtheorem*{un-no-thm}{Theorem}
\newtheorem{cor}[thm]{Corollary}     
\newtheorem{lem}[thm]{Lemma}         
\newtheorem{bigthm}{Theorem}
\theoremstyle{definition}
\theoremstyle{definition}
\theoremstyle{definition}
\theoremstyle{remark}
\newtheorem{rem}[thm]{Remark}
\newtheorem{rems}[thm]{Remarks}
\newtheorem*{acks}{Acknowledgements}
\newtheorem*{out}{Outline}
\newtheorem*{intro-rem}{Remark}
\newtheorem*{intro-rems}{Remarks}
\begin{document}
\title{On finite domination and Poincar\'e duality}
\author{John R. Klein} 
\address{Wayne State University,
Detroit, MI 48202} 
\email{klein@math.wayne.edu} 

\begin{abstract}  The object of this paper is to show that non-homotopy finite Poincar\'e duality spaces are plentiful.
Let $\pi$ be finitely presented group. Assuming that
the reduced Grothendieck group $\tilde K_0(\Bbb Z[\pi])$ has a non-trivial
2-divisible element, we construct a
finitely dominated Poincar\'e space $X$ with fundamental group $\pi$ such that
$X$ is not homotopy finite. The dimension of $X$
can be made arbitrarily large.  
Our proof relies on a result
 which says that every finitely dominated space possesses a stable Poincar\'e duality thickening.
\end{abstract}

\maketitle
\setlength{\parindent}{15pt}
\setlength{\parskip}{1pt plus 0pt minus 1pt}
\def\smsh{\wedge}
\def\flush{\flushpar}
\def\dbslash{/\!\! /}
\def\:{\colon\!}
\def\Bbb{\mathbb}
\def\bold{\mathbf}
\def\cal{\mathcal}
\def\End{\text{\rm End}}
\def\stableto{\mapstochar \!\!\to}
\def\lr{{\ell r}}
\def\ad{\text{\rm{ad}}}
\def\tr{\sigma}
\def\Top{\text{\rm Top}}

\setcounter{tocdepth}{1}
\tableofcontents
\addcontentsline{file}{sec_unit}{entry}

\section{Introduction} \label{sec:intro}
For a group $\pi$, let $K_0(\Bbb Z[\pi])$ be the Grothendieck group of the category of finitely generated projective (left) 
$\Bbb Z[\pi]$-modules.
According to \cite{Wall_finiteness1}, a connected finitely dominated  space $X$ with fundamental group $\pi$ determines an element
\[
w(X)\in K_0(\Bbb Z[\pi])\, ,
\] 
called the {\it Wall finiteness obstruction}.
If we assume that $\pi$ is finitely presented, then $X$ has the homotopy type of a finite CW complex
if and only if $\tilde w(X)= 0$, where $\tilde w(X) \in \tilde K_0(\Bbb Z[\pi])$ is the image $w(X)$ in
the reduced Grothendieck group. 

Recall that $K_0(\Bbb Z[\pi])$ comes equipped with an involution $
\sigma$ that is
induced by mapping a finitely generated projective $\Bbb Z[\pi]$-module $P$ to its linear dual
$P^* := \hom_{\Bbb Z[\pi]}(P,\Bbb Z[\pi])$, where the latter is converted into a left module using
the involution of the group ring induced by mapping a group element to its inverse.

The main result of this paper is the following.

\begin{bigthm} \label{bigthm:ample} Let $\pi$ be a finitely presented group. Assume that $ \tilde K_0(\Bbb Z[\pi])$
has an element $\kappa$ such that $2\kappa \ne 0$.
Then there is a connected, finitely dominated Poincar\'e duality space 
$X$ such that
\begin{itemize}
\item $X$ is not homotopy finite, 
\item  $\pi_1(X) = \pi$, and
\item  $\tilde w(X) = \kappa + (-1)^d\sigma(\kappa)$, where $d = \dim X$
may be chosen to be arbitrarily large.
\end{itemize}
\end{bigthm}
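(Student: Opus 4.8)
The plan is to realize $\kappa$ by a finitely dominated complex, promote that complex to a Poincar\'e pair using the thickening result quoted in the abstract, and then \emph{double} the pair; the freedom to choose the dimension $d$ is what lets us force the finiteness obstruction to be non-zero.

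\emph{Step 1: realize $\kappa$.} Since $\pi$ is finitely presented there is a finite $2$-complex with fundamental group $\pi$, and it is classical (Wall) that every element of $\tilde K_0(\mathbb{Z}[\pi])$ is the reduced finiteness obstruction of some connected finitely dominated CW complex with fundamental group $\pi$; choose such a $Y$ with $\tilde w(Y)=\kappa$.

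\emph{Step 2: thicken.} Apply the stable Poincar\'e duality thickening result to $Y$: for every sufficiently large $d$ there is a $d$-dimensional Poincar\'e pair $(W,\partial W)$ together with a homotopy equivalence $W\simeq Y$, and by taking $d$ large we may assume the inclusion $\partial W\hookrightarrow W$ is $2$-connected, so that $\partial W$ is connected and $\pi_1(\partial W)\xrightarrow{\ \cong\ }\pi_1(W)=\pi$. In particular $W$ and $\partial W$ are finitely dominated and $\tilde w(W)=\kappa$.

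\emph{Step 3: double and compute.} Set $X:=W\cup_{\partial W}W$. Doubling a Poincar\'e pair yields a closed Poincar\'e duality space of formal dimension $d$ (Wall), $X$ is finitely dominated since it is glued from finitely dominated pieces along cofibrations, and van Kampen together with the $2$-connectedness of $\partial W\hookrightarrow W$ gives $\pi_1(X)=\pi$. For the finiteness obstruction, Lefschetz duality identifies $C_*(\widetilde W,\widetilde{\partial W})$ with the dual complex $C^{\,d-*}(\widetilde W)$; taking Euler characteristics in $K_0(\mathbb{Z}[\pi])$ gives $w(W,\partial W)=(-1)^d\sigma(w(W))$, and the long exact sequence of the pair gives $w(\partial W)=w(W)-(-1)^d\sigma(w(W))$. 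Feeding this into the Mayer--Vietoris (sum) formula for the finiteness obstruction yields
\[
w(X)=w(W)+w(W)-w(\partial W)=w(W)+(-1)^d\sigma(w(W)),
\]
so $\tilde w(X)=\kappa+(-1)^d\sigma(\kappa)$.

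\emph{Step 4: the parity trick, and the main obstacle.} It remains to arrange $\tilde w(X)\ne 0$, so that $X$ is not homotopy finite. The two possible values of $\tilde w(X)$, according to the parity of $d$, are $\kappa+\sigma(\kappa)$ and $\kappa-\sigma(\kappa)$, whose sum is $2\kappa\ne 0$; hence at least one of them is non-zero, and fixing that parity while letting $d\to\infty$ through it produces the desired $X$ of arbitrarily large dimension. The one genuinely hard input is Step 2, the existence of the Poincar\'e duality thickening, which is exactly the technical theorem cited in the abstract; everything else is bookkeeping with the sum formula and duality. The only point requiring care is that $\partial W\hookrightarrow W$ be connective enough that all fundamental groups in sight are identified with $\pi$ and the sum formula holds over $\mathbb{Z}[\pi]$ without correction terms, which is automatic once $d$ is large.
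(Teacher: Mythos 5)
Your overall route — realize $\kappa$, thicken, double, compute via Wall's duality formula, then use the parity of $d$ — is the same as the paper's. The duality computation in Step 3 is exactly Wall's lemma that $\tilde w(\partial W) = \kappa + (-1)^{d-1}\sigma(\kappa)$, and the sum formula gives $\tilde w(X) = \kappa + (-1)^d\sigma(\kappa)$ as you say.

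The gap is in Step 2, where you assert that ``for every sufficiently large $d$ there is a $d$-dimensional Poincar\'e pair $(W,\partial W)$ with $W\simeq Y$, and by taking $d$ large we may assume $\partial W \hookrightarrow W$ is $2$-connected.'' The stable thickening theorem (Theorem~\ref{bigthm:KL}) does not say this: it produces \emph{one} Poincar\'e pair of \emph{some} unspecified dimension, and it asserts directly — not as a consequence of high codimension — that $\partial K$ is connected and $\pi_1(\partial K)\to\pi_1(K)$ is an isomorphism. Your connectivity-from-large-$d$ intuition is borrowed from smooth or PL manifold thickenings of finite complexes; in the finitely dominated setting the thickening is built from the dualizing spectrum and Borel constructions, and there is no ambient dimension to ``take large.'' So as written your Step 2 invokes a stronger statement than is available, and the ``arbitrarily large dimension'' part of the theorem is not established.

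The paper fills exactly this gap with a separate device: starting from the one pair $(K,\partial K)$ that Theorem~\ref{bigthm:KL} supplies, it forms the $j$-fold unreduced fiberwise suspension $X_j := S^j_K\partial K$ of the map $\partial K\to K$. This gives a finitely dominated Poincar\'e duality space of formal dimension $d+j-1$ with $\tilde w(X_j)=\kappa+(-1)^{d+j-1}\sigma(\kappa)$, with $X_1$ recovering your double. Letting $j$ grow then gives every sufficiently large dimension, of both parities, and your parity observation (that at least one of $\kappa+\sigma(\kappa)$, $\kappa-\sigma(\kappa)$ is nonzero since their sum is $2\kappa$) finishes the argument. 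So the fix for your proposal is simple: replace ``rethicken in dimension $d$ for every large $d$'' with ``fiberwise-suspend the single pair produced by the thickening theorem,'' and drop the $2$-connectedness claim, since the $\pi_1$-isomorphism is part of the thickening theorem's conclusion rather than something you need to engineer.
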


\begin{rems}  (1).  Our method of proof 
shows that for any such $\kappa$, there is a positive integer $D$ such that there is an $X$ satisfying Theorem \ref{bigthm:ample} 
for every even $d >D$ or every odd $d>D$, but not both. 
\smallskip

\noindent (2). Lizhen Qin pointed out to me that Theorem \ref{bigthm:ample} is similar to Wall's \cite[thm.~1.5]{Wall_Poincare}. However, the approaches 
are quite different. Wall's method is to start with a finite complex $K$ with fundamental group $\pi$. He then
thickens $K$ to a compact $d$-manifold with boundary $(N,\partial N)$. Lastly, Wall  modifies $\partial N$
by attaching certain cells while at the same time taking care to retain Poincar\'e duality. 
The resulting Poincare space is designed so as to have finiteness obstruction $\kappa + (-1)^d\sigma(\kappa)$. 

On the other hand, our approach only requires Theorem \ref{bigthm:KL} below, which asserts that any finitely dominated space $L$ 
admits a {\it Poincar\'e thickening}, i.e.,
there is a Poincar\'e pair $(K,\partial K)$ with $K$ having the homotopy type of $L$. This is proved with the help of the {\it dualizing spectrum} of
\cite{Klein_dualizing}.
\smallskip

\noindent (3). Let $\pi = \Bbb Z/p\Bbb Z$ denote a cyclic group of odd prime order. 
The abelian group $\tilde K_0(\Bbb Z[\pi])$ has a nontrivial 2-divisible element whenever
the relative class number $h_1(p)$ is not a power of $2$ (cf. \cite[pp.~30-31]{Milnor_K-theory}).
The smallest prime satisfying this condition is $p = 23$ (cf.~\cite{wiki:Cyclotomic_field} for many other examples). Moreover,
 $h_1(p)$ is not a power of 2 whenever $p$  is an irregular prime, so there are infinitely many such $p$ (recall that an prime $p$ is irregular
 if and only if it divides  the class number $h(p)$ of the cyclotomic field generated by $e^{2\pi i/p}$).

Making use of the relative class number, Wall constructed
 a non-homotopy finite Poincar\'e duality space $X$ of dimension 4 
  \cite[cor.~5.4.2]{Wall_finiteness1}, where $\pi_1(X) = \pi = \Bbb Z/p \Bbb Z$. 
  Wall's approach is number theoretic.
 \smallskip
  
\noindent (4). A geometric framework to constructing non-homotopy finite Poincar\'e duality spaces
 is outlined in the work of Pedersen and Ranicki
\cite{Pedersen-Ranicki}, who use Siebenmann's theory of tame ends. However, in the latter paper
no examples are provided.
\end{rems}  

\begin{rem} I am not aware of a general criterion for determining when 
$\tilde K_0(\Bbb Z[\pi])$ has non-trivial 2-divisible elements. However,
if $\pi$ has a factor $H$ such that $\tilde K_0(\Bbb Z[H])$ has non-trivial 
2-divisible elements, then so will $\tilde K_0(\Bbb Z[\pi])$.  \end{rem}

The proof of Theorem \ref{bigthm:ample} will rely on another result which may be of independent interest:

 \begin{bigthm}[Stable Poincar\'e Thickening] \label{bigthm:KL} Given a finitely dominated space $L$, there is a Poincar\'e pair 
 \[
 (K,\partial K)
 \]
such that 
\begin{itemize}
\item $\partial K$ connected and finitely dominated,
\item $K$ homotopy equivalent to $L$, and  
\item  $\pi_1(\partial K) \to \pi_1(K)$ is an isomorphism.
\end{itemize}
 \end{bigthm}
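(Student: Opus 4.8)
The plan is to thicken a finite complex dominating $L$ to a compact manifold with boundary, and then to ``split off'' the homotopy idempotent that cuts $L$ out of that complex, carrying out the splitting at the level of Spanier--Whitehead/Atiyah duality data and only afterwards realizing the result by a space. Write $\pi:=\pi_1(L)$. By Wall's finiteness theory \cite{Wall_finiteness1}, $\pi$ is finitely presented and $L$ is dominated by a finite complex; attaching finitely many $2$-cells to such a complex (along normal generators of the kernel of the induced surjection onto $\pi$, which is normally finitely generated since both groups are finitely presented) one may fix a finite complex $K_0$ with $\pi_1 K_0=\pi$ and maps $i\colon L\to K_0$, $r\colon K_0\to L$ with $ri\simeq \id_L$. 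Embed $K_0$ in the interior of $D^{d}$ for any sufficiently large $d$ and let $(N,\partial N)$ be a smooth regular neighborhood: a compact $d$-manifold with boundary, $N\simeq K_0$, so that $(N,\partial N)$ is a Poincar\'e pair of dimension $d$ and $L\hookrightarrow N$ is a homotopy retract inducing an isomorphism on $\pi_1$.

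\textbf{Duality data over $L$.} Write $\mathbb S[-]:=\Sigma^\infty_+(-)$ and pass to universal covers. Since $L$ is a homotopy retract of the manifold $N$, the $\pi$-spectrum $\mathbb S[\widetilde L]$ is a retract of $\mathbb S[\widetilde N]$, hence dualizable over $\mathbb S[\pi]$, and $L$ inherits from $N$ a stable normal bundle $\nu:=i^{*}\nu_N$. Atiyah duality for the compact manifold $N$, together with its naturality under the retraction and the dualizing spectrum of \cite{Klein_dualizing}, identifies the $\pi$-equivariant dual $\mathbb D_\pi\mathbb S[\widetilde L]$ with a Thom spectrum over $\widetilde L$: for a suitable orientation character $w$ there is a vector bundle $\zeta$ over $L$ of large fiber dimension $c=d-\dim L$ with $\Sigma^\infty \widetilde L^{\zeta}\simeq \Sigma^{d}\,\mathbb D_\pi\mathbb S[\widetilde L]\wedge\mathbb S^{w}$. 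The self-map $e:=ir$ of $K_0\simeq N$ is a homotopy idempotent splitting through $L$; the point of this step is that it induces a compatible idempotent on the entire Lefschetz-duality package of $(N,\partial N)$, whose ``image'' is a chain-level Lefschetz-duality datum relative to $\mathbb Z[\pi]$ that we then wish to realize geometrically with $L$ as interior.

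\textbf{Constructing $\partial K$ and concluding.} Take $d>2\dim L+1$, so that the relevant middle range of dimensions is empty. Realize $L$ by a CW complex of dimension $\le \dim L$ and attach free $\pi$-cells in dimensions $[\,d-1-\dim L,\ d-1\,]$, with attaching maps dictated by the chain-level duality datum above, producing a space $\partial K$ and a map $\partial K\to L=:K$. By construction $\partial K$ is connected and finitely dominated (its $\mathbb Z[\pi]$-chain complex is finitely dominated), $\pi_1\partial K\to\pi$ is an isomorphism since all attached cells have dimension $>2$, the map $\partial K\to K$ is a homology isomorphism through dimension $\dim L$, and $H_*(\widetilde K,\widetilde{\partial K})$ in the complementary degrees is, by design, Lefschetz-dual to $H_{d-*}(\widetilde K;\mathbb Z[\pi]^{w})$, yielding a fundamental class in $H_d(K,\partial K;\mathbb Z^{w})$. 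Since $\partial K\to K$ is a $\pi_1$-isomorphism, Wall's criterion reduces checking Poincar\'e duality to $\mathbb Z[\pi]$-coefficients, and cap product with the fundamental class realizes the required isomorphisms precisely because the attaching maps implement the duality equivalence transported from $(N,\partial N)$. Hence $(K,\partial K)$ is a Poincar\'e pair of dimension $d$ with $K\simeq L$, and $d$ may be taken to be any sufficiently large integer.

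\textbf{Main obstacle.} I expect the crux to be the passage from the stable/spectral duality datum on $L$ to an honest space-level boundary $\partial K$ for which $(K,\partial K)$ is a \emph{geometric} Poincar\'e pair and not merely a homology-duality pair: one must arrange the high-dimensional attaching maps so that cap product with the fundamental class is a bona fide isomorphism at the space level, and this is exactly where the dualizing-spectrum formalism of \cite{Klein_dualizing} and the naturality of Atiyah duality under the non-embedded homotopy retract $L\hookrightarrow N$ must be handled carefully. Tracking the orientation character $w$ also forces attention to the parity of $d$; the hypothesis that $d$ be large is what makes the middle-dimensional obstructions disappear, so $d$ may be taken arbitrarily large, as the statement and its use in Theorem~\ref{bigthm:ample} require.
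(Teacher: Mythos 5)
Your route is genuinely different from the paper's, and it contains a real gap at the crucial step. The paper proves Theorem~\ref{bigthm:KL} entirely within the dualizing-spectrum framework of \cite{Klein_dualizing}: writing $L\simeq BG$, it reduces the theorem to constructing a finitely dominated based $G$-space $Z$ with $\Sigma^\infty Z\simeq\Sigma^n\mathcal{D}_G$, because by the criterion `$3\Rightarrow 1$' of \cite[thm.~A]{Klein_dualizing} the unreduced Borel pair $(EG\times_G CZ,\,EG\times_G Z)$ is then automatically a Poincar\'e pair. The space $Z$ is produced by a purely stable argument: dualize a finite complex $X$ dominating $L$, unstably realize the dual as a finite $G$-space $Y$, desuspend the idempotent $s'r'$ to a self-map of $\Sigma^k Y$, and take the mapping telescope. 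The space-level realization of the duality equivalence is thereby outsourced to an already-proved theorem; nothing like a cell-by-cell geometric construction is needed.

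By contrast, you thicken the dominating finite complex $K_0$ to a compact manifold pair $(N,\partial N)$ and propose to ``split off'' the idempotent $e=ir$ at the level of Lefschetz-duality data, then build $\partial K$ by attaching free $\pi$-cells ``with attaching maps dictated by the chain-level duality datum.'' This is precisely the step where the proof must happen, and it is only asserted. Producing a space whose pair-chain-complex realizes a prescribed chain-level duality requires realizing chain maps by spaces, which in general involves nontrivial obstruction theory; and because $L$ is only finitely dominated, the chain complex of $\widetilde L$ is a finitely dominated (not finite) $\Bbb Z[\pi]$-complex, so one cannot run an induction over cells the way one would for a finite $L$ (the paper flags exactly this point in its remark before the proof). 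Your ``Main obstacle'' paragraph correctly identifies the difficulty, but the proposal does not resolve it. Your method is closer in spirit to Wall's construction in \cite[thm.~1.5]{Wall_Poincare}, which the paper explicitly contrasts with its own approach; if you want to complete your argument along those lines you would need to carry out the boundary-modification step in detail, whereas the paper's Borel-construction route eliminates that step altogether.

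One smaller point worth noting: the parenthetical claim that $\partial K$ is finitely dominated ``since its $\Bbb Z[\pi]$-chain complex is finitely dominated'' implicitly uses that $\pi_1(\partial K)\to\pi$ is an isomorphism together with Wall's criterion; that part is fine in outline, but it rests on the same unconstructed cell attachment and so cannot yet be checked.
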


\begin{rem} The proof of Theorem \ref{bigthm:KL}  is relatively easy when $L$ is a finite complex, since one may appeal to  induction
on the number of cells to embed $L$ up to homotopy in a high dimensional Euclidean space. This results in a smooth compact manifold
thickening $M$ of $L$ as in \cite{Wall_thickening}. Then $(M,\partial M)$ will fulfill the assertion of Theorem \ref{bigthm:KL}.

However, in the finitely dominated case, the inductive approach is no longer available as the number of cells of $L$ may be infinite.
Instead, our approach in this case will be homotopy theoretic: we will follow the proof
of `$3\Rightarrow 1$' of \cite[thm.~A]{Klein_dualizing}. 
\end{rem}

\begin{rem} Finitely dominated Poincar\'e duality spaces fall under the rubric of projective surgery theory \cite{Pedersen-Ranicki}.
If $(f,b)\: M \to X$ is a normal map from a compact $d$-manifold $M$ to a finitely dominated Poincar\'e duality space $X$ of dimension $d$, then
it has a projective surgery obstruction $\sigma^p(f,b) \in L^p_d(\pi_1(X))$. 
If $d \ge 4$, then $(f,b) \times 1\: M \times S^1 \to X\times S^1$ will be
normally cobordant to a homotopy equivalence if and only if $\sigma^p(f,b) = 0$.

\end{rem}

\begin{out}  \S\ref{sec:prelim}  is mostly language.
In \S\ref{sec:thm_ample} we show how Theorem \ref{bigthm:KL} implies Theorem \ref{bigthm:ample}. In
\S\ref{sec:KL} we prove Theorem \ref{bigthm:KL}.
\end{out}

\begin{acks} I am indebted to Ian Leary for drawing my attention to
Wall's example \cite[cor.~5.4.2]{Wall_Poincare}. I am also grateful to Lizhen Qin for
 drawing my attention to Wall's \cite[thm.~1.5]{Wall_Poincare}, which I had either forgotten or was unaware of.

This research was supported by the U.S.~Department of Energy, Office of Science, under Award Number DE-SC-SC0022134.
\end{acks}

\section{Conventions} \label{sec:prelim}

\subsection{Spaces} Let $\Top$ be the Quillen model category of compactly generated weak Hausdorff spaces
\cite{Hirschhorn_model}. The
weak equivalences of $\Top$ are the weak homotopy equivalences, and the fibrations are the Serre fibrations. The cofibrations 
are defined using the right lifting property with respect to the trivial fibrations.
In particular, every object $\Top$ is fibrant. An object is cofibrant whenever it is a retract of a cell complex. We let $\Top_\ast$
denote the category of based spaces. Then $\Top_\ast$ inherits a Quillen model structure from $\Top$ by means of the forgetful functor
$\Top_\ast\to \Top$.

An object $\Top$ or $\Top_\ast$ is {\it finite} if it is a finite cell complex. It is {\it homotopy finite}
if it weakly equivalent to a finite object. A object is $X$ is {\it finitely dominated} if it is a retract of a 
homotopy finite object.

If $X$ is an unbased space, we write $X_+$ for the based space $X \amalg \ast$ given by taking
the disjoint union with a basepoint.

\subsection{Poincar\'e duality spaces} Recall that an object $X \in \Top$ is {\it Poincar\'e duality space} of (formal) dimension $d$ if
there exists a pair
\[
(\cal L,[X])
\]
in which $\cal L$ is a rank one local coefficient system and 
\[
[X] \in H_d(X;\cal L)
\]
is a {\it fundamental class} such that for all local coefficient systems $\cal B$, the cap product homomorphism
\[
\cap [X] \: H^\ast(X;\cal B) \to H_{d-\ast}(X;\cal L\otimes\cal B)
\]
is an isomorphism in all degrees (cf.~\cite{Wall_Poincare}, \cite{Klein_Poincare}).  
The Poincar\'e
spaces considered in this paper cofibrant.
If the pair $(\cal L, [X])$ exists it is determined up to unique isomorphism. 
Also note that closed manifolds
are homotopy finite Poincar\'e duality spaces.

\begin{rem}  If $X$ is a connected Poincar\'e space, then $X$ is finitely dominated if and only
if $\pi_1(X)$ is finitely presented (cf. \cite[thm~D]{Klein_dualizing}).
\end{rem}

\section{The proof Theorem \ref{bigthm:ample}} \label{sec:thm_ample}

In this section we show how Theorem \ref{bigthm:KL} implies Theorem \ref{bigthm:ample}. The proof
of Theorem \ref{bigthm:KL} will appear in the next section.

As mentioned in the introduction,  the group ring $\Bbb Z[\pi]$ comes equipped with a canonical involution which on group elements is defined
by $g\mapsto g^{-1}$. One may use this involution to convert right modules to left modules and {\it vice versa}.
The Grothendieck group $K_0(\Bbb Z[\pi])$ is, in turn, equipped with an involution that is induced by the operation 
$P \mapsto P^\ast$, in which $P$ is a finitely generated projective left $\Bbb Z[\pi]$-module and  $P^* = \hom_{\Bbb Z[\pi]}(P,\Bbb Z[\pi])$
is its linear dual. The latter is a finitely generated projective right $\Bbb Z[\pi]$-module which we identify as a finitely generated projective right 
$\Bbb Z[\pi]$-module.  Denote the involution on $K_0(\Bbb Z[\pi])$ by $\sigma$. Then $\sigma$ restricts to an involution on the reduced
Grothendieck group $\tilde K_0(\Bbb Z[\pi])$.

 Let $(K,\partial K)$ be a finitely dominated Poincar\'e pair of dimension $d$.  Assume that both $K$ and $\partial K$ are connected
 and $\pi_1(\partial K) \to \pi_1(K) = \pi$ is an isomorphism. Then we have
 
 \begin{lem} [Wall {\cite[thm.~1.4]{Wall_finiteness1}}] Let $\kappa = \tilde w(K)$.  Then
 \[
 \tilde w(\partial K) = \kappa + (-1)^{d-1} \sigma(\kappa)\, .
 \]
 \end{lem}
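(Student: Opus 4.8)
The plan is to exploit the long exact sequence of the Poincaré pair $(K,\partial K)$ together with the duality isomorphisms, tracking the Wall obstruction additively across the sequence. First I would pass to a finite-domination datum: since $K$ is finitely dominated, it is dominated by a finite complex, and one obtains a finitely generated projective chain complex $C_\ast$ of $\Z[\pi]$-modules computing $H_\ast(\widetilde K)$, whose Euler characteristic $\sum (-1)^i [C_i]$ represents $w(K)$, hence $\kappa = \tilde w(K)$ is its reduced class. Because $\pi_1(\partial K) \xrightarrow{\sim} \pi_1(K) = \pi$, the universal cover $\widetilde{\partial K}$ sits inside $\widetilde K$, and I would model the triple by a short exact sequence of finitely generated projective $\Z[\pi]$-chain complexes
\[
0 \to C_\ast(\widetilde{\partial K}) \to C_\ast(\widetilde K) \to C_\ast(\widetilde K, \widetilde{\partial K}) \to 0\, .
\]
Additivity of the finiteness obstruction under such extensions (Wall's sum formula) gives $\tilde w(\partial K) = \tilde w(K) - \tilde w(K,\partial K)$ in $\tilde K_0(\Z[\pi])$, once one checks each term is itself finitely dominated — which follows here since $\partial K$ is assumed finitely dominated and $K$ is, so the relative complex is too.

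The crux is then to identify $\tilde w(K,\partial K)$ with $(-1)^d \sigma(\kappa)$. Here I would invoke Poincaré–Lefschetz duality for the pair: cap product with the fundamental class $[K] \in H_d(K; \mathcal L)$ gives a chain homotopy equivalence
\[
C^{d-\ast}(\widetilde K) \;\simeq\; C_\ast(\widetilde K, \widetilde{\partial K})
\]
of $\Z[\pi]$-chain complexes, where $C^{d-\ast}(\widetilde K) = \hom_{\Z[\pi]}(C_{d-\ast}(\widetilde K), \Z[\pi])$ is the dual complex (twisted by the orientation character, which does not affect the class in $\tilde K_0$ since twisting by a rank-one system is an isomorphism of $\Z[\pi]$ with itself as a module). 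Taking reduced finiteness obstructions of both sides: the right side is $\tilde w(K,\partial K)$; the left side is the obstruction of the dual complex, which in degree $j$ is $C^{d-j} = (C_{d-j})^\ast$, contributing $(-1)^j[(C_{d-j})^\ast] = (-1)^j \sigma([C_{d-j}])$. Re-indexing $i = d-j$, the total is $\sum_i (-1)^{d-i}\sigma([C_i]) = (-1)^d \sigma\big(\sum_i (-1)^i[C_i]\big) = (-1)^d\sigma(\kappa)$. Combining, $\tilde w(\partial K) = \kappa - (-1)^d\sigma(\kappa) = \kappa + (-1)^{d-1}\sigma(\kappa)$, as claimed.

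The main obstacle I anticipate is not any single deep input but the bookkeeping needed to make the additivity and duality steps rigorous in the finitely dominated setting: one must ensure that $C_\ast(\widetilde{\partial K})$, $C_\ast(\widetilde K)$, and the relative complex can all be simultaneously replaced by bounded complexes of finitely generated projectives (using that all three spaces are finitely dominated and $\pi$ is finitely presented), that the short exact sequence can be arranged to split levelwise so the sum formula applies on the nose, and that the duality chain equivalence is genuinely a map of $\Z[\pi]$-complexes after the orientation twist is absorbed. Since this lemma is exactly Wall's \cite[thm.~1.4]{Wall_finiteness1}, I would ultimately cite that reference for the details, but the sketch above is the self-contained argument one would reconstruct.
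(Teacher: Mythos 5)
The paper does not prove this lemma; it is stated as Wall's theorem and cited directly to \cite[thm.~1.4]{Wall_finiteness1}, so there is no "paper proof" against which to compare yours. That said, your sketch is a correct high-level reconstruction of Wall's argument: pass to finitely generated projective $\Bbb Z[\pi]$-chain models for $\widetilde{\partial K}\subset\widetilde K$, use the sum formula for the degreewise split short exact sequence to get $\tilde w(\partial K)=\tilde w(K)-\tilde w(K,\partial K)$, and then use the Poincar\'e--Lefschetz chain equivalence $C^{d-\ast}(\widetilde K)\simeq C_\ast(\widetilde K,\widetilde{\partial K})$ to compute $\tilde w(K,\partial K)=(-1)^d\sigma(\kappa)$; the re-indexing and signs all check out. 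Your decision to ultimately defer to Wall for the bookkeeping mirrors what the paper does.

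One point worth sharpening: you dismiss the orientation twist with the assertion that twisting by a rank-one local system leaves classes in $\tilde K_0(\Bbb Z[\pi])$ unchanged. This is not automatic --- a ring automorphism of $\Bbb Z[\pi]$ need not induce the identity on $K_0$ --- and Wall in fact works with the $w$-twisted involution $g\mapsto w(g)g^{-1}$ on the group ring. The lemma as stated in the paper uses the untwisted involution $g\mapsto g^{-1}$, so either one should restrict to the orientable case (which suffices for the paper's examples) or one should verify that the twist acts trivially on $\tilde K_0$ for the groups at hand. This is a small gap inherited from the paper's phrasing rather than a defect specific to your argument, but if you are reconstructing the proof rather than citing Wall, it needs to be addressed explicitly. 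The other technical issue you raise --- producing bounded finitely generated projective models for all three chain complexes simultaneously --- is real and is exactly the kind of care Wall takes; flagging it without resolving it is reasonable given that you are, in the end, citing him.
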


 With $(K,\partial K)$ as above, the double
 \[
 X := K \cup_{\partial K} K
 \]
 is a finitely dominated Poincar\'e duality space of dimension $d$. Assume that $\partial K$ is connected and homotopy finite.
Then by additivity of the finiteness obstruction, we infer that 
 \[
 \tilde w(X) = \tilde w(K) + \tilde w(K) - \tilde w(\partial K) = \kappa + (-1)^{d}\sigma(\kappa)\, .
 \]
 
 \begin{cor} If $2\kappa \ne 0$, then either $\partial K$ is not homotopy finite or $X$ is not homotopy finite.
 \end{cor}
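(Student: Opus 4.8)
The plan is to read the corollary off directly from the two obstruction formulas already obtained above, namely $\tilde w(\partial K) = \kappa + (-1)^{d-1}\sigma(\kappa)$ and $\tilde w(X) = \kappa + (-1)^d\sigma(\kappa)$. First I would recall that both $\partial K$ and $X$ are connected and finitely dominated (the latter having just been observed), so each has finitely presented fundamental group by \cite{Wall_finiteness1}, and hence each is homotopy finite if and only if its reduced Wall obstruction vanishes. It therefore suffices to show that $\tilde w(\partial K)$ and $\tilde w(X)$ cannot both be zero when $2\kappa\ne 0$.

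The second and essentially only computational step is to add the two formulas: since $(-1)^{d-1}\sigma(\kappa)$ and $(-1)^d\sigma(\kappa)$ are negatives of one another,
\[
\tilde w(\partial K) + \tilde w(X) = 2\kappa .
\]
If $2\kappa\ne 0$, the left-hand side is non-zero, so at least one of $\tilde w(\partial K)$ and $\tilde w(X)$ is non-zero, and by the first paragraph the corresponding space ($\partial K$ or $X$) is not homotopy finite. That is exactly the assertion of the corollary.

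The one point that deserves care — and the closest thing here to an obstacle — is to be sure that the additivity formula for the finiteness obstruction genuinely applies to the homotopy pushout $X = K\cup_{\partial K} K$ used to produce the formula for $\tilde w(X)$: one needs the hypothesis that $\pi_1(\partial K)\to\pi_1(K)$ is an isomorphism, so that the three obstructions, a priori defined over different group rings, may legitimately be added inside $\tilde K_0(\mathbb Z[\pi])$ with no change-of-rings correction terms, and one needs each of $K$, $\partial K$ and $X$ to be finitely dominated. Both of these are part of the standing hypotheses on $(K,\partial K)$, finite domination of the double having been recorded just above, so in fact no real difficulty arises and the corollary is immediate.
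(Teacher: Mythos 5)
Your proposal is correct and takes essentially the same route as the paper: both arguments amount to observing that the two obstruction formulas force $2\kappa = 0$ if $\tilde w(\partial K)$ and $\tilde w(X)$ both vanish. You reach this by adding the two formulas so the $\sigma(\kappa)$ terms cancel, giving $\tilde w(\partial K) + \tilde w(X) = 2\kappa$, whereas the paper subtracts and then uses that $\sigma$ is an involution; these are the same trivial manipulation, and your extra remarks about the $\pi_1$-isomorphism and finite domination hypotheses for the sum theorem correctly identify what makes the additivity formula legitimate.
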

 
  \begin{proof} If $\partial K$ is homotopy finite, then $ \kappa + (-1)^{d-1} \sigma(\kappa)=0$. If $X$ is homotopy finite, then
 $ \kappa + (-1)^{d} \sigma(\kappa)=0$. Consequently, if both $\partial K$ and $X$ are homotopy finite, we
 infer that $ \sigma(\kappa) = -\sigma(\kappa)$ and this implies that $\kappa = -\kappa$ or
 $2\kappa = 0$, a contradiction.
 \end{proof}

Recall that for a map  of spaces $Y \to B$, the associated $j$-fold unreduced {\it fiberwise suspension}  is
\[
S^j_B Y := Y \times D^j \cup_{Y\times S^{j-1}}  B \times S^{j-1}\, .
\]
Applying this construction to $\partial K \to K$, we obtain a finitely dominated Poincar\'e duality space
\[
X_j := S^j_K \partial K
\]
having formal dimension $d_{j-1} := d + j-1$. Note that $X_0 = \partial K$ and $X_1$ is weakly equivalent to the double of $(K,\partial K)$.

\begin{cor} \label{cor:sequence} Assume $2\kappa \ne 0$.  Then for every single $j \ge 0$, at least one of
the Poincar\'e duality spaces $X_{2j},X_{2j+1}$ is not homotopy finite. 
Furthermore, $\tilde w(X_j) = \kappa + (-1)^{d_{j-1}} \sigma(\kappa)$.
\end{cor}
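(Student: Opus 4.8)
The plan is to iterate the double construction, treating the fiberwise suspension $X_j = S^j_K\partial K$ as a sequence of "boundaries of boundaries." First I would establish the finiteness obstruction formula $\tilde w(X_j) = \kappa + (-1)^{d_{j-1}}\sigma(\kappa)$ by induction on $j$. The base cases $j = 0$ and $j = 1$ are already recorded in the text: $X_0 = \partial K$ has $\tilde w(X_0) = \kappa + (-1)^{d-1}\sigma(\kappa)$ by Wall's Lemma, and $X_1 \simeq X = K\cup_{\partial K}K$ has $\tilde w(X_1) = \kappa + (-1)^d\sigma(\kappa)$ by the additivity computation. For the inductive step I would observe that the unreduced fiberwise suspension satisfies $S^{j}_K\partial K = S^1_K(S^{j-1}_K\partial K)$ up to homotopy equivalence, so that $X_j$ is (weakly equivalent to) the double of the Poincar\'e pair $(C_{j-1}, X_{j-1})$, where $C_{j-1} := \partial K\times D^{j-1}\cup_{\partial K\times S^{j-2}}K\times S^{j-2}$ is the fiberwise half-suspension; note $C_{j-1}$ is homotopy equivalent to $K$ and $\pi_1(X_{j-1})\to\pi_1(C_{j-1})$ is an isomorphism (the fiberwise suspension only alters the space in a direction of dimension $\ge 1$ over $K$, and $\pi_1$ is unaffected since $\partial K\to K$ is $\pi_1$-iso by hypothesis). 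Applying the same additivity/Wall argument used for $X_1$ to this new pair — with $\dim C_{j-1} = d + j - 1 = d_{j-1}$ and $\dim X_{j-1} = d_{j-2}$ — gives $\tilde w(X_j) = \kappa + (-1)^{d_{j-1}}\sigma(\kappa)$, completing the induction.

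With the formula in hand, the first assertion is immediate. Fix $j\ge 0$ and suppose, for contradiction, that both $X_{2j}$ and $X_{2j+1}$ are homotopy finite. Then $\tilde w(X_{2j}) = 0$ and $\tilde w(X_{2j+1}) = 0$. Since $d_{2j-1} = d + 2j - 1$ and $d_{2j} = d + 2j$ have opposite parities, the formula yields $\kappa + (-1)^{d-1}\sigma(\kappa) = 0$ and $\kappa + (-1)^{d}\sigma(\kappa) = 0$ simultaneously. Subtracting, we get $2(-1)^{d-1}\sigma(\kappa) = 0$, hence $\sigma(\kappa)$ is $2$-torsion; applying $\sigma$ (an involution, hence order-preserving) gives $2\kappa = 0$, contradicting the hypothesis $2\kappa \ne 0$. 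Therefore at least one of $X_{2j}, X_{2j+1}$ fails to be homotopy finite.

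The main thing to be careful about is the inductive identification of $X_j$ with a double and the verification that the relevant $\pi_1$-maps are isomorphisms at every stage — this is what licenses reusing Wall's Lemma and the additivity of the finiteness obstruction. Concretely, one must check that $S^1_K(-)$ applied to a Poincar\'e pair over $K$ again produces a Poincar\'e duality space of the expected formal dimension (this is standard: fiberwise suspension shifts formal dimension by one and preserves the Poincar\'e property, cf.\ the gluing formula for Poincar\'e pairs), and that finite domination is preserved (clear, since $S^1_K(-)$ is built from products with disks and spheres and pushouts along cofibrations of finitely dominated spaces). The parity bookkeeping $d_{j-1} = d + j - 1$ then does the rest. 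Everything else is the routine $2\kappa\ne 0$ argument already appearing in the preceding corollary.
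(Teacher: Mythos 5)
Your strategy --- establish the $\tilde w(X_j)$ formula first and then run the $2$-torsion parity argument --- is the intended one; the paper offers no explicit proof, simply letting the corollary follow the pattern of the $j=0,1$ case. However, your inductive step contains a concrete error. You set $C_{j-1} := \partial K\times D^{j-1}\cup_{\partial K\times S^{j-2}}K\times S^{j-2}$ and assert $C_{j-1}\simeq K$, but that expression is literally $X_{j-1}=S^{j-1}_K\partial K$, not a ``half'' of $X_j$; and $X_{j-1}$ is certainly not equivalent to $K$ in general (take $j=1$, where it is $\partial K$). A correct half would be something like $\partial K\times D^j_+\cup_{\partial K\times D^{j-1}}K\times D^{j-1}$, with $D^j_+$ a closed half-disk of $D^j$ and $D^{j-1}\subset S^{j-1}$ the corresponding hemisphere: since $D^j_+$ and $D^{j-1}$ are contractible this is indeed equivalent to $K$, and two such halves intersect in $X_{j-1}$ and glue to give $X_j$.

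You can also sidestep the induction and the half-suspension bookkeeping entirely. The pair $(K\times D^j,\,X_j)$ is a finitely dominated Poincar\'e pair of dimension $d+j$ (the product of $(K,\partial K)$ with $(D^j,S^{j-1})$, with $\partial(K\times D^j)=\partial K\times D^j\cup_{\partial K\times S^{j-1}}K\times S^{j-1}=S^j_K\partial K=X_j$). Both spaces are connected, $\tilde w(K\times D^j)=\tilde w(K)=\kappa$, and $\pi_1(X_j)\to\pi_1(K\times D^j)$ is an isomorphism: for $j=0$ by hypothesis, for $j=1$ by van Kampen applied to the double, and for $j\geq 2$ because the fiber of $X_j\to K$ is the join of the fiber of $\partial K\to K$ with $S^{j-1}$, hence simply connected. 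Wall's lemma applied directly to this pair gives $\tilde w(X_j)=\kappa+(-1)^{(d+j)-1}\sigma(\kappa)=\kappa+(-1)^{d_{j-1}}\sigma(\kappa)$ in one step. Your closing parity argument --- that $\tilde w(X_{2j})=0$ and $\tilde w(X_{2j+1})=0$ together force $2\sigma(\kappa)=0$, hence $2\kappa=0$ --- is correct as written.
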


\begin{proof}[Proof of Theorem \ref{bigthm:ample}]
 Let $\kappa \in \tilde K_0(\Bbb Z[\pi])$ be  such that $2\kappa \ne 0$.  Then by \cite[thm.~F]{Wall_finiteness1} there is 
 a finitely dominated space $L$ with fundamental group $\pi$ such that $\tilde w(L) = \kappa$.
By Theorem \ref{bigthm:KL}, there is a finitely dominated Poincar\'e pair $(K,\partial K)$
 and a homotopy equivalence $K\simeq L$.  
 The proof is completed by applying Corollary \ref{cor:sequence}.
 \end{proof}
 
 \section{The proof of Theorem \ref{bigthm:KL}} \label{sec:KL}
 
 As mentioned in the introduction, we will follow the proof of  `$3 \Rightarrow 1$' of \cite[thm.~A]{Klein_dualizing}. 
 However, some details in that proof were omitted in the finitely dominated case.  We provide those details here.

We first review the various notions of finiteness in the equivariant setting. 
Let $G$ be a topological group object of $\Top$ whose underlying space is cofibrant. Let $\Top_\ast(G)$
denote the category of based $G$-spaces.
An object 
$Z\in \Top_\ast(G)$ is  {\it finite} if it is built up from a point by finitely many free $G$-cell attachments, where by a free $G$-cell,
we mean $D^k \times G$.  Similarly, $Z$ is 
{\it homotopy finite} if it is  weakly equivalent to a $G$-finite object. Lastly, $Z$ is {\it finitely dominated} if it is
a retract of a homotopy finite object. One also has the corresponding notions of finiteness in the category of 
unbased $G$-spaces as well as in the category of 
(naive) $G$-spectra.
 
 \begin{proof}[Proof of Theorem \ref{bigthm:KL}]
 We may assume without loss in generality that $L = BG$ for a suitable cofibrant topological group $G$.
 The idea is to construct a finitely dominated based $G$-space $Z$ and a $G$-equivariant weak equivalence
 \[
 \Sigma^\infty Z \simeq \Sigma^n \cal D_G\, ,
 \]
 where $n \ge 0$ is some integer and $\cal D_G = S[G]^{hG}$ is the {\it dualizing spectrum} of $G$ (i.e., the homotopy fixed points
 of $G$ acting on $S[G]$, where the latter denotes the suspension spectrum of $G_+$).

 Assuming this to be the case, then as in the proof of `$3 \Rightarrow 1$' of \cite[thm.~A]{Klein_dualizing} 
 the desired Poincar\'e pair $(K,\partial K)$ is given by the pair
 of unreduced Borel constructions
 \[
 (EG\times_G CZ,EG\times_G Z)\, ,
 \]
 where $CZ$ is the cone on $Z$. 
 
 We now proceed with the construction of $Z$.
 Choose a factorization of the identity map
 \[
 L \to X \to L
 \]
 in which the unbased space $X$ is homotopy finite. 
 Let $\tilde L = EG$ and  $\tilde X = X\times^L EG$  be the fiber product of $X$ and $EG$ along $L$. Then one has a factorization of $G$-spaces
 \[
 \tilde L \to \tilde X \to \tilde L
 \]
 in which $\tilde X$ is a homotopy finite unbased $G$-space.
 
 Then the equivariant dual
 \[
 \hom_G(\tilde X_+,S[G])
 \]
 is a homotopy finite $G$-spectrum. This means  that there is an $n >0$, a homotopy finite based 
 $G$-space $Y$ and an equivariant weak equivalence
 \[
 \Sigma^\infty Y \simeq \Sigma^n \hom_G(\tilde X_+,S[G])\, .
 \]
 Moreover, the dualizing spectrum $\cal D_G = \hom_G(\tilde L_+,S[G])$ is an equivariant retract of $\hom_G(\tilde X_+,S[G])$. It follows that
 $\Sigma^n \cal D_G$ is an equivariant homotopy retract of the homotopy finite $G$-spectrum $\Sigma^\infty Y$. 
 Consequently, there is a factorization
 \[
 \Sigma^n \cal D_G @>s' >> \Sigma^\infty Y @> r' >>  \Sigma^n \cal D_G\, .
 \]
 in which $r'\circ s'$ is equivariantly homotopic to the identity. Consider the self-map $s'\circ r'\:  \Sigma^\infty Y \to  \Sigma^\infty Y$.
 As $Y$ is homotopy finite, there is an integer $k>0$ such that $s'\circ r'$ equivariantly desuspends to a $G$-map $\Sigma^k Y \to \Sigma^k Y$.
 Let $Z$ denote the mapping telescope of 
 \[
\Sigma^k Y@> s'\circ r' >> \Sigma^k Y @> s'\circ r' >> \Sigma^k Y @> s'\circ r' >> \cdots
 \]
Then the based $G$-space $Z$ is an equivariant retract of $\Sigma^k Y$ and  $\Sigma^\infty Z \simeq \Sigma^n \cal D_G$, as was to be
 proved.
   \end{proof}

\bibliography{john}


\end{document}